%
%
%
%
%
\RequirePackage{fix-cm}
\documentclass{svjour3}                     
\smartqed  
\usepackage{graphicx}
 \usepackage{mathptmx}      
%
\usepackage{latexsym}
\usepackage{amssymb}
\usepackage{amsmath}
%
%
%
\begin{document}

\title{Closed forms for powers and inverses of special matrices
}


\author{Miloud Mihoubi}


\institute{M. Mihoubi \at
              USTHB, Faculty of Mathematics, P.O. Box 32, El Alia, 16111,
Algiers, Algeria \\
\email{mmihoubi@usthb.dz or miloudmihoubi@gmail.com}
}


\maketitle

\begin{abstract}
This contribution is motivated by old and recent
works on matrix powers and their applications on combinatorial sequences. We
give in this paper the $s$-th powers and the inverses for special upper
triangular matrices and the $s$-th powers for special non-triangular matrices.
The used tools are Lagrange inversion formula and the partial Bell polynomials.
\keywords{Powers of matrices \and compositions of functions \and Lagrange inversion formula \and partial Bell polynomials.}
\subclass{15A16 \and 15A29 \and 15B99}
\end{abstract}

\section{Introduction}

The calculation of the matrix power occurs in different mathematical
frameworks such as those occurring in combinatorial sequences, linear
differential equations and statistics. The theory of matrices attracted the
interest of many authors of different frameworks in mathematics, specially in
combinatorics. They gave methods and some interesting algorithms to calculate
their powers, see \cite{aga,apo,hua,kir,leo,liz,shur}. In
combinatorics, to study such combinatorial sequences, some authors refer to
use the matrix representations of sequences, such the works of Aceto and
Ca\c{c}\~{a}o \cite{ace} on a matrix approach to Sheffer polynomials, Ben
Taher and Rachidi \cite{ben} on the matrix powers and exponential by the
$r$-generalized Fibonacci sequences, Chen and Louck \cite{che} on the
combinatorial power of the companion matrix, Rahmani \cite{rah} on the
Akiyama-Tanigawa matrix and related combinatorial identities, Yang and Micek
\cite{yan} on generalized Pascal functional matrix and its applications, and,
Spivey and Zimmer \cite{spi} on symmetric polynomials, Pascal matrices and
Stirling matrices. Motivated by these works, we give in this paper the $s$-th
powers for special upper triangular matrices and for their inverses. We also
give the $s$-th powers for other non-triangular matrices.

\noindent Before starting, we denote for the next by $\left[  \mathcal{A}%
\right]  _{k,n}$ for $\left(  k,n\right)  $-th entry of the matrix
$\mathcal{A},$
\[
\delta_{\left(  a\geq b\right)  }=\left \{
\begin{array}
[c]{l}%
1\text{ if }a\geq b\\
0\text{ otherwise}%
\end{array}
\right.  ,\text{ \  \  \  \ }\delta_{\left(  a\right)  }=\left \{
\begin{array}
[c]{l}%
1\text{ if }a=0\\
0\text{ otherwise,}%
\end{array}
\right.
\]
and, if $\mathcal{A}$ is invertible, the $\left(  -s\right)  $-th power of
$\mathcal{A}$ means the $s$-th power of its inverse $\mathcal{A}^{-1}:$%
\[
\mathcal{A}^{0}:=\mathcal{I}\text{ \ and \ }\mathcal{A}^{-s}:=\left(
\mathcal{A}^{-1}\right)  ^{s},\  \  \  \ s=0,1,2,\ldots,
\]
where $\mathcal{I}$ denotes the identity matrix. Also, for any power series
$\varphi,$ with $\varphi \left(  0\right)  =0\ $and $\varphi^{\prime}\left(
0\right)  \neq0,$ we define the $s$-th composition $\varphi^{\left \langle
s\right \rangle }$ of $\varphi \ $by%
\begin{align*}
\varphi^{\left \langle 0\right \rangle }\left(  t\right)   &  =t,\\
\varphi^{\left \langle 1\right \rangle }\left(  t\right)   &  =\varphi \left(
t\right)  ,\\
\varphi^{\left \langle s\right \rangle }\left(  t\right)   &  =\varphi
^{\left \langle s-1\right \rangle }\circ \varphi \left(  t\right)  =\varphi
^{\left \langle s-1\right \rangle }\left(  \varphi \left(  t\right)  \right)
,\  \  \ s=1,2,3,\ldots \\
&  \text{and}\\
\varphi^{\left \langle -s\right \rangle }\left(  t\right)   &  =\left(
\varphi^{\left \langle -1\right \rangle }\right)  ^{\left \langle s\right \rangle
}\left(  t\right)  ,\text{ \  \ }s=0,1,2,\ldots,
\end{align*}
where $\varphi^{\left \langle -1\right \rangle }$ denotes the compositional
inverse of $\varphi,$ i.e.
\[
\varphi^{\left \langle -1\right \rangle }\circ \varphi \left(  t\right)
=\varphi \circ \varphi^{\left \langle -1\right \rangle }\left(  t\right)  =t.
\]
Moreover, since this contribution is related to partial Bell polynomials, let
us recall that these polynomials are introduced by Bell \cite{bell} and
present a mathematical tool often used the determine the $n$-th derivative of
composite function. They are defined by their generating function%
\begin{equation}
\underset{n\geq k}{\sum}B_{n,k}\left(  \varphi \right)  \dfrac{t^{n}}{n!}%
=\frac{1}{k!}\left(  \varphi \left(  t\right)  \right)  ^{k}. \label{13}%
\end{equation}
For more details on these numbers, one can see \cite{bell,com,mih0,mih3}%
.\newline The principal main results of this paper are given by Theorems
\ref{T1}, \ref{T2} and \ref{T3}.

\section{Powers and inverses of special triangular matrices}

Let $\left(  a_{n};n\geq1\right)  ,$ be a sequence of real numbers with
$a_{j}\neq0,$ $j\geq1,$ $\varphi,g,h$ be power series such that $\varphi
\left(  0\right)  =0,$ $\varphi^{\prime}\left(  0\right)  g\left(  0\right)
h\left(  0\right)  \neq0,$ and let $\mathcal{T}$ be the matrix whose its
$\left(  k,n\right)  $-th entry is%
\begin{equation}
\left[  \mathcal{T}\right]  _{k,n}=\frac{a_{k}}{a_{n}}\frac{1}{k!}\left(
\frac{d}{dt}\right)  _{\! \!t=0}^{\! \!n-1}\left(  \left(  h\left(  t\right)
\right)  ^{n}\frac{d}{dt}\left(  \left(  \varphi \left(  t\right)  \right)
^{k}g\left(  t\right)  \right)  \right)  ,\  \ n\geq k\geq1. \label{1}%
\end{equation}
We see below that $\left[  \mathcal{T}\right]  _{k,n}$ must be%
\begin{equation}
\left[  \mathcal{T}\right]  _{k,n}=\frac{a_{k}}{a_{n}}\frac{1}{k!}\left(
\frac{d}{dt}\right)  _{\! \!t=0}^{\! \!n}\left(  \left(  \left(  \varphi
\circ \omega \right)  ^{\left \langle s\right \rangle }\left(  t\right)  \right)
^{k}g\circ \omega \left(  t\right)  \right)  , \label{3}%
\end{equation}
where $z=\omega \left(  t\right)  $ solution of the equation $z=th\left(
z\right)  .$\newline In this section, we give the $s$-th power and the inverse
of an upper triangular matrix whose $\left(  k,n\right)  $-th entry is in form
of (\ref{1}).

\begin{lemma}
\label{L0}Let $\left(  a_{n};n\geq1\right)  $ be a sequence of complex numbers
with $a_{j}\neq0,$ $j\geq1,$ and let $\left(  \mathcal{A}_{i}\right)  $ and
$\left(  \mathcal{B}_{i}\right)  $ be two sequences of matrices such that
\begin{equation}
\left[  \mathcal{A}_{i}\right]  _{k,n}=\frac{a_{k}}{a_{n}}\left[
\mathcal{B}_{i}\right]  _{k,n},\ i\geq1. \label{21}%
\end{equation}
Then%
\begin{equation}
\left[  \mathcal{A}_{1}\cdots \mathcal{A}_{s}\right]  _{k,n}=\frac{a_{k}}%
{a_{n}}\left[  \mathcal{B}_{1}\cdots \mathcal{B}_{s}\right]  _{k,n}%
,\  \  \ s\geq1. \label{22}%
\end{equation}

\end{lemma}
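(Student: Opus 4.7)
The plan is to proceed by induction on $s$. The base case $s=1$ is just the hypothesis~(\ref{21}) applied to $i=1$.

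For the inductive step, assume~(\ref{22}) holds for some $s\geq 1$ and consider the product $\mathcal{A}_{1}\cdots \mathcal{A}_{s+1}$. I would expand
\begin{equation*}
\left[ \mathcal{A}_{1}\cdots \mathcal{A}_{s+1}\right] _{k,n}=\sum_{j=k}^{n}\left[ \mathcal{A}_{1}\cdots \mathcal{A}_{s}\right] _{k,j}\left[ \mathcal{A}_{s+1}\right] _{j,n},
\end{equation*}
where the sum runs between $k$ and $n$ because all matrices involved are upper triangular. Substituting the induction hypothesis into the first factor and~(\ref{21}) into the second gives
\begin{equation*}
\left[ \mathcal{A}_{1}\cdots \mathcal{A}_{s+1}\right] _{k,n}=\sum_{j=k}^{n}\frac{a_{k}}{a_{j}}\left[ \mathcal{B}_{1}\cdots \mathcal{B}_{s}\right] _{k,j}\cdot \frac{a_{j}}{a_{n}}\left[ \mathcal{B}_{s+1}\right] _{j,n}.
\end{equation*}
Since each $a_{j}\neq 0$, the factors $a_{j}$ cancel pairwise inside the sum, and $\frac{a_{k}}{a_{n}}$ can be pulled outside, yielding exactly $\frac{a_{k}}{a_{n}}\left[ \mathcal{B}_{1}\cdots \mathcal{B}_{s+1}\right] _{k,n}$ by the matrix multiplication formula.

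There is no real obstacle here; the lemma is a clean bookkeeping statement that the diagonal conjugation $[\mathcal{B}]_{k,n}\mapsto \frac{a_{k}}{a_{n}}[\mathcal{B}]_{k,n}$ commutes with products, because it can be viewed as conjugation by the diagonal matrix $\mathrm{diag}(a_{1},a_{2},\ldots )$, i.e.\ $\mathcal{A}_{i}=D\mathcal{B}_{i}D^{-1}$ where $D=\mathrm{diag}(a_{k}^{-1})$ (or rather $D^{-1}\mathcal{B}_{i}D$ depending on the convention), so $\mathcal{A}_{1}\cdots \mathcal{A}_{s}=D(\mathcal{B}_{1}\cdots \mathcal{B}_{s})D^{-1}$. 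I would mention this conceptual reformulation as a one-line alternative proof after the inductive argument, since it makes transparent why the result will also hold for negative powers (through $\mathcal{A}_{i}^{-1}=D\mathcal{B}_{i}^{-1}D^{-1}$) when applied in the subsequent sections of the paper.
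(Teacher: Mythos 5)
Your proof is correct. Your inductive step is, in substance, the same computation as the paper's: the paper expands the whole product in one stroke as
\begin{equation*}
\left[ \mathcal{A}_{1}\cdots \mathcal{A}_{s+1}\right] _{k,n}=\sum_{k\leq j_{1}\leq \cdots \leq j_{s}\leq n}\left[ \mathcal{A}_{1}\right] _{k,j_{1}}\cdots \left[ \mathcal{A}_{s+1}\right] _{j_{s},n}
\end{equation*}
and lets the ratios $\frac{a_{k}}{a_{j_{1}}}\frac{a_{j_{1}}}{a_{j_{2}}}\cdots \frac{a_{j_{s}}}{a_{n}}$ telescope to $\frac{a_{k}}{a_{n}}$, whereas you carry out the identical cancellation one factor at a time by induction on $s$; these are interchangeable packagings of the same telescoping, and yours is arguably the tidier write-up (the paper's display even carries off-by-one typos, writing $\mathcal{B}_{s}$ in the last factor where $\mathcal{B}_{s+1}$ is meant). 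What is genuinely different is your closing observation that the lemma is nothing but conjugation by a diagonal matrix: with $D=\mathrm{diag}\left( a_{1},a_{2},\ldots \right) $, invertible since $a_{j}\neq 0$, hypothesis (\ref{21}) reads $\mathcal{A}_{i}=D\mathcal{B}_{i}D^{-1}$ (this resolves your hedge: it is $\mathrm{diag}\left( a_{k}\right) $ on the left, not $\mathrm{diag}\left( a_{k}^{-1}\right) $), whence $\mathcal{A}_{1}\cdots \mathcal{A}_{s}=D\left( \mathcal{B}_{1}\cdots \mathcal{B}_{s}\right) D^{-1}$ immediately, and likewise $\mathcal{A}^{-1}=D\mathcal{B}^{-1}D^{-1}$. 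The paper never makes this reformulation explicit; it buys a one-line proof, clarifies that upper triangularity is needed only so that the entrywise products of these infinite matrices are finite sums (not for the cancellation itself), and, as you note, makes transparent why the reduction to $a_{j}=1$ remains legitimate for the inverses and negative powers used in Theorem \ref{T2}, where the paper simply invokes Lemma \ref{L0} again.
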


\begin{proof}
There is no proof for $s=1.$ For $s\geq2,$ by definition, we have
\begin{align*}
\left[  \mathcal{A}_{1}\cdots \mathcal{A}_{s}\right]  _{k,n}  &  =\underset
{j_{1}\geq1,\ldots,j_{s-1}\geq1}{\sum}\left[  \mathcal{A}_{1}\right]
_{k,j_{1}}\left[  \mathcal{A}_{2}\right]  _{j_{1},j_{2}}\cdots \left[
\mathcal{A}_{s}\right]  _{j_{s-1},n}\\
&  =\underset{j_{1}\geq1,\ldots,j_{s-1}\geq1}{\sum}\frac{a_{k}}{a_{j_{1}}%
}\left[  \mathcal{B}_{1}\right]  _{k,j_{1}}\frac{a_{j_{1}}}{a_{j_{2}}}\left[
\mathcal{B}_{2}\right]  _{j_{1},j_{2}}\cdots \frac{a_{j_{s}}}{a_{n}}\left[
\mathcal{B}_{s}\right]  _{j_{s-1},n}\\
&  =\frac{a_{k}}{a_{n}}\underset{j_{1}\geq1,\ldots,j_{s-1}\geq1}{\sum}\left[
\mathcal{B}_{1}\right]  _{k,j_{1}}\left[  \mathcal{B}_{2}\right]
_{j_{1},j_{2}}\cdots \left[  \mathcal{B}_{s}\right]  _{j_{s-1},n}\\
&  =\frac{a_{k}}{a_{n}}\left[  \mathcal{B}_{1}\cdots \mathcal{B}_{s}\right]
_{k,n}.
\end{align*}

\end{proof}

\noindent The first main result is given by the following theorem.

\begin{theorem}
[Closed form for $\mathcal{T}^{s},\ s\in \mathbb{N}$]\label{T1}Let
$\mathcal{T}$ be the matrix whose its $\left(  k,n\right)  $-th entry is given
by (\ref{1}). Then, for any non-negative integer $s,$ there holds%
\begin{equation}
\left[  \mathcal{T}^{s}\right]  _{k,n}=\frac{a_{k}}{a_{n}}\frac{1}{k!}\left(
\frac{d}{dt}\right)  _{\! \!t=0}^{\! \!n}\left(  \left(  \left(  \varphi
\circ \omega \right)  ^{\left \langle s\right \rangle }\left(  t\right)  \right)
^{k}\underset{i=0}{\overset{s-1}{\prod}}g\circ \omega \circ \left(  \varphi
\circ \omega \right)  ^{\left \langle i\right \rangle }\left(  t\right)  \right)
, \label{2}%
\end{equation}
where, for $s=0,$ the empty product is evaluated at one.
\end{theorem}

\begin{proof}
By setting $\mathcal{T}^{0}=\mathcal{I},$ it is obvious that the theorem is
true for $s=0.$ For $s\geq1,$ by Lemma \ref{L0}, it suffices to prove the
proposition for $a_{j}=1,$ $j\geq1.$ Since $\mathcal{T}$ is an upper
triangular matrix, then%
\[
\left[  \mathcal{T}^{s+1}\right]  _{k,n}=\left[  \mathcal{T}^{s}%
\mathcal{T}\right]  _{k,n}=\underset{j=k}{\overset{n}{\sum}}\left[
\mathcal{T}^{s}\right]  _{k,j}\left[  \mathcal{T}\right]  _{j,n}.
\]
So, if we set $F_{k,0}\left(  t\right)  =\frac{t^{k}}{k!},\  \ F_{k,s}\left(
t\right)  =\underset{n\geq1}{\sum}\left[  \mathcal{T}^{s}\right]  _{k,n}%
\frac{t^{n}}{n!},\  \ s\geq1,\  \ k\geq1,$ we get%
\begin{align*}
F_{k,s}\left(  t\right)   &  =\underset{n\geq k}{\sum}\left(  \underset
{j=k}{\overset{n}{\sum}}\left[  \mathcal{T}^{s-1}\right]  _{k,j}\left[
\mathcal{T}\right]  _{j,n}\right)  \frac{t^{n}}{n!}\\
&  =\underset{j\geq k}{\sum}\left[  \mathcal{T}^{s-1}\right]  _{k,j}\left(
\underset{n\geq j}{\sum}\left[  \mathcal{T}\right]  _{j,n}\frac{t^{n}}%
{n!}\right)  \\
&  =\underset{j\geq k}{\sum}\left[  \mathcal{T}^{s-1}\right]  _{k,j}\left(
\frac{1}{j!}\underset{n\geq j}{\sum}\left(  \frac{d}{dt}\right)
_{\! \!t=0}^{\! \!n-1}\left[  \left(  h\left(  t\right)  \right)  ^{n}\frac
{d}{dt}\left(  \left(  \varphi \left(  t\right)  \right)  ^{j}g\left(
t\right)  \right)  \right]  \frac{t^{n}}{n!}\right)
\end{align*}
and by Lagrange inversion formula, the last expansion becomes%
\begin{align*}
F_{k,s}\left(  t\right)   &  =\underset{j\geq k}{\sum}\left[  \mathcal{T}%
^{s-1}\right]  _{k,j}\frac{1}{j!}\left(  \varphi \left(  z\right)  \right)
^{j}g\left(  z\right)  \\
&  =g\left(  z\right)  \underset{j\geq k}{\sum}\left[  \mathcal{T}%
^{s-1}\right]  _{k,j}\frac{\left(  \varphi \left(  z\right)  \right)  ^{j}}%
{j!}\\
&  =g\left(  z\right)  F_{k,s-1}\left(  \varphi \left(  z\right)  \right)  ,
\end{align*}
where $z=\omega \left(  t\right)  :z=th\left(  z\right)  .$ So, recursively on
$s$ we obtain%
\begin{align*}
F_{k,s}\left(  t\right)   &  =g\left(  \omega \left(  t\right)  \right)
F_{k,s-1}\left(  \varphi \circ \omega \left(  t\right)  \right)  \\
&  =g\left(  \omega \left(  t\right)  \right)  g\left(  \omega \circ \left(
\varphi \circ \omega \right)  \left(  t\right)  \right)  F_{k,s-2}\left(  \left(
\varphi \circ \omega \right)  ^{\left \langle 2\right \rangle }\left(  t\right)
\right)  \\
&  =\cdots \\
&  =g\left(  \omega \left(  t\right)  \right)  \cdots g\left(  \omega
\circ \left(  \varphi \circ \omega \right)  ^{\left \langle s-1\right \rangle
}\left(  t\right)  \right)  F_{k,0}\left(  \left(  \varphi \circ \omega \right)
^{\left \langle s\right \rangle }\left(  t\right)  \right)  \\
&  =\frac{1}{k!}\left(  \left(  \varphi \circ \omega \right)  ^{\left \langle
s\right \rangle }\left(  t\right)  \right)  ^{k}g\left(  \omega \left(
t\right)  \right)  \cdots g\left(  \omega \circ \left(  \varphi \circ
\omega \right)  ^{\left \langle s-1\right \rangle }\left(  t\right)  \right)  .
\end{align*}

\end{proof}

\noindent The second main result is given by the following theorem.

\begin{theorem}
[Closed form for $\mathcal{T}^{-s},\ s\in \mathbb{N}$]\label{T2}Let
$\mathcal{T}$ be the matrix whose its $\left(  k,n\right)  $-th entry is given
by (\ref{1}). Then, for any non-negative integer $s,$ there holds%
\begin{equation}
\left[  \mathcal{T}^{-s}\right]  _{k,n}=\frac{a_{k}}{a_{n}}\frac{1}{k!}\left(
\frac{d}{dt}\right)  _{\! \!t=0}^{\! \!n}\left(  \left(  \left(  \frac{\psi
}{h\circ \psi}\right)  ^{\left \langle s\right \rangle }\left(  t\right)
\right)  ^{k}\left(  \underset{i=0}{\overset{s-1}{\prod}}g\circ \psi
\circ \left(  \frac{\psi}{h\circ \psi}\right)  ^{\left \langle i\right \rangle
}\left(  t\right)  \right)  ^{-1}\right)  . \label{5}%
\end{equation}
Here, we have $\frac{\psi}{h\circ \psi}=\left(  \varphi \circ \omega \right)
^{\left \langle -1\right \rangle },$ where $\psi:=\varphi^{\left \langle
-1\right \rangle }$ and for $s=0,$ the empty product is evaluated at one.
\end{theorem}

\begin{proof}
We prove that%
\[
\left[  \mathcal{T}^{-1}\right]  _{k,n}=\frac{a_{k}}{a_{n}}\frac{1}{k!}\left(
\frac{d}{dt}\right)  _{\! \!t=0}^{\! \!n}\left(  \left(  \frac{\psi}%
{h\circ \psi}\left(  t\right)  \right)  ^{k}\left(  g\circ \psi \left(  t\right)
\right)  ^{-1}\right)  .
\]
By Lemma \ref{L0}, it suffices to prove the proposition for $a_{j}=1,$
$j\geq1.$\newline By exponential generating function, we have%
\[
\underset{n\geq k}{\sum}\left[  \mathcal{T}\right]  _{k,n}\frac{t^{n}}%
{n!}=\frac{1}{k!}\left(  \varphi \circ \omega \left(  t\right)  \right)
^{k}g\circ \omega \left(  t\right)
\]
and we must have%
\[
\underset{n\geq k}{\sum}\left[  \mathcal{T}^{-1}\right]  _{k,n}\frac{t^{n}%
}{n!}=\frac{1}{k!}\left(  \frac{\psi}{h\circ \psi}\left(  t\right)  \right)
^{k}\left(  g\circ \psi \left(  t\right)  \right)  ^{-1}.
\]
Then%
\begin{align*}
\underset{n\geq k}{\sum}\left(  \underset{j=k}{\overset{n}{\sum}}\left[
\mathcal{T}^{-1}\right]  _{k,j}\left[  \mathcal{T}\right]  _{j,n}\right)
\frac{t^{n}}{n!}  &  =\underset{j\geq k}{\sum}\left[  \mathcal{T}^{-1}\right]
_{k,j}\left(  \underset{n\geq j}{\sum}\left[  \mathcal{T}\right]  _{j,n}%
\frac{t^{n}}{n!}\right) \\
&  =\left(  g\circ \omega \right)  \left(  t\right)  \underset{j\geq k}{\sum
}\left[  \mathcal{T}^{-1}\right]  _{k,j}\frac{\left(  \varphi \circ
\omega \left(  t\right)  \right)  ^{j}}{j!}\\
&  =g\left(  \omega \left(  t\right)  \right)  \frac{1}{k!}\frac{\left(
\frac{\psi}{h\circ \psi}\circ \varphi \circ \omega \left(  t\right)  \right)  ^{k}%
}{g\left(  \psi \circ \varphi \circ \omega \left(  t\right)  \right)  }\\
&  =\frac{t^{k}}{k!},
\end{align*}
which gives $\underset{j=k}{\overset{n}{\sum}}\left[  \mathcal{T}^{-1}\right]
_{k,j}\left[  \mathcal{T}\right]  _{j,n}=\left[  \mathcal{I}\right]  _{k,n}.$
By setting $\left(  \mathcal{T}^{-1}\right)  ^{0}=\mathcal{I},$ it is obvious
that the theorem is true. Otherwise, for $s\geq1,$ by Lemma \ref{L0}, it
suffices to prove the proposition for $a_{j}=1,$ $j\geq1.$ Since $\mathcal{T}$
is an upper triangular matrix, then%
\[
\left[  \mathcal{T}^{-s-1}\right]  _{k,n}=\left[  \mathcal{T}^{-s}%
\mathcal{T}^{-1}\right]  _{k,n}=\underset{j=k}{\overset{n}{\sum}}\left[
\mathcal{T}^{-s}\right]  _{k,j}\left[  \mathcal{T}^{-1}\right]  _{j,n}.
\]
So, if we set $F_{k,s}\left(  t\right)  =\underset{n\geq k}{\sum}\left[
\mathcal{T}^{-s}\right]  _{k,n}\frac{t^{n}}{n!},$ $k\geq1,$ we get%
\begin{align*}
F_{k,s}\left(  t\right)   &  =\underset{n\geq k}{\sum}\left(  \underset
{j=k}{\overset{n}{\sum}}\left[  \mathcal{T}^{-\left(  s-1\right)  }\right]
_{k,j}\left[  \mathcal{T}^{-1}\right]  _{j,n}\right)  \frac{t^{n}}{n!}\\
&  =\underset{j\geq k}{\sum}\left[  \mathcal{T}^{-\left(  s-1\right)
}\right]  _{k,j}\left(  \underset{n\geq j}{\sum}\left[  \mathcal{T}%
^{-1}\right]  _{j,n}\frac{t^{n}}{n!}\right) \\
&  =\frac{1}{g\circ \psi \left(  t\right)  }\underset{j\geq k}{\sum}\left[
\mathcal{T}^{-\left(  s-1\right)  }\right]  _{k,j}\frac{1}{j!}\left(
\frac{\psi}{h\circ \psi}\left(  t\right)  \right)  ^{j}\\
&  =\frac{1}{g\circ \psi \left(  t\right)  }F_{k,s-1}\left(  \frac{\psi}%
{h\circ \psi}\left(  t\right)  \right)
\end{align*}
and since $F_{k,0}\left(  t\right)  =\frac{t^{k}}{k!},$ then, recursively on
$s,$ this last relation completes the proof.
\end{proof}

\begin{remark}
We note here that for $s=1$ in Theorem \ref{T1} there is no contradiction
because Lagrange inversion formula proves the identity%
\begin{equation}
\left(  \frac{d}{dt}\right)  _{\! \!t=0}^{\! \!n-1}\left(  \left(  h\left(
t\right)  \right)  ^{n}\frac{d}{dt}\left(  \left(  \varphi \left(  t\right)
\right)  ^{k}g\left(  t\right)  \right)  \right)  =\left(  \frac{d}%
{dt}\right)  _{\! \!t=0}^{\! \!n}\left(  \left(  \varphi \circ \omega \left(
t\right)  \right)  ^{k}g\left(  \omega \left(  t\right)  \right)  \right)
.\label{4}%
\end{equation}

\end{remark}

\noindent The following corollaries represent some particular cases of
Theorems \ref{T1} and \ref{T2} .

\noindent For $h\left(  t\right)  =1$ in Theorems \ref{T1} and \ref{T2} we
obtain the following corollary.

\begin{corollary}
\label{C1}Let $\left(  a_{n};n\geq1\right)  ,$ be a sequence of complex
numbers with $a_{j}\neq0,$ $j\geq1,$ $\varphi,g$ be power series such that
$\varphi \left(  0\right)  =0$ and let $\mathcal{T}$ be the matrix whose its
$\left(  k,n\right)  $-th entry is%
\begin{equation}
\left[  \mathcal{T}\right]  _{k,n}=\frac{a_{k}}{a_{n}}\frac{1}{k!}\left(
\frac{d}{dt}\right)  _{\! \!t=0}^{\! \!n}\left(  \left(  \varphi \left(
t\right)  \right)  ^{k}g\left(  t\right)  \right)  . \label{24}%
\end{equation}
Then%
\begin{equation}
\left[  \mathcal{T}^{s}\right]  _{k,n}=\frac{a_{k}}{a_{n}}\frac{1}{k!}\left(
\frac{d}{dt}\right)  _{\! \!t=0}^{\! \!n}\left(  \left(  \varphi^{\left \langle
s\right \rangle }\left(  t\right)  \right)  ^{k}\underset{i=0}{\overset
{s-1}{\prod}}g\left(  \varphi^{\left \langle i\right \rangle }\left(  t\right)
\right)  \right)  ,\text{ \ }s=0,1,2,\ldots, \label{8}%
\end{equation}
and%
\begin{equation}
\left[  \mathcal{T}^{-s}\right]  _{k,n}=\frac{a_{k}}{a_{n}}\frac{1}{k!}\left(
\frac{d}{dt}\right)  _{\! \!t=0}^{\! \!n}\left(  \frac{\left(  \psi
^{\left \langle s\right \rangle }\left(  t\right)  \right)  ^{k}}{g\left(
\psi \left(  t\right)  \right)  \cdots g\left(  \psi^{\left \langle
s\right \rangle }\left(  t\right)  \right)  }\right)  ,\text{ \ }%
s=0,1,2,\ldots. \label{9}%
\end{equation}

\end{corollary}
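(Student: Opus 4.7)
The plan is to read off the corollary by specializing Theorems \ref{T1} and \ref{T2} to the case $h(t)\equiv 1$ and tracking how every ingredient simplifies. First I would observe that when $h\equiv 1$ the defining functional equation $\omega (t)=th(\omega (t))$ collapses to $\omega (t)=t$, so $\omega $ is the identity series and $F\circ \omega =F$ for every power series $F$. In particular $\varphi \circ \omega =\varphi $, and therefore $(\varphi \circ \omega )^{\left\langle s\right\rangle }=\varphi ^{\left\langle s\right\rangle }$ for every integer $s$. Applied to (\ref{23}), the factor $(h(t))^{n}$ equals $1$, so the matrix entry reduces to
\begin{equation*}
\left[ \mathcal{A}\right] _{k,n}=\frac{a_{k}}{a_{n}}\frac{1}{k!}\left( \frac{d}{dt}\right) _{\! \!t=0}^{\! \!n-1}\left[ \frac{d}{dt}\left( (\varphi (t))^{k}g(t)\right) \right] ,
\end{equation*}
which after folding the two derivatives into one gives exactly (\ref{24}), so the statement of the corollary is well-posed as a specialization.

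Next I would substitute into (\ref{2}) of Theorem \ref{T1}: the factor $((\varphi \circ \omega )^{\left\langle s\right\rangle }(t))^{k}$ collapses to $(\varphi ^{\left\langle s\right\rangle }(t))^{k}$, and each factor of the product becomes $g\circ \omega \circ (\varphi \circ \omega )^{\left\langle i\right\rangle }(t)=g(\varphi ^{\left\langle i\right\rangle }(t))$, immediately yielding (\ref{8}). For (\ref{9}) I would use that $h\circ \psi \equiv 1$, so $\psi /(h\circ \psi )=\psi $ and the compositional iterates in (\ref{5}) reduce to $\psi ^{\left\langle s\right\rangle }$. Each factor of the product in (\ref{5}) simplifies as
\begin{equation*}
g\circ \psi \circ \left( \frac{\psi }{h\circ \psi }\right) ^{\left\langle i\right\rangle }(t)=g\circ \psi \circ \psi ^{\left\langle i\right\rangle }(t)=g(\psi ^{\left\langle i+1\right\rangle }(t)),
\end{equation*}
so that taking the reciprocal of the product over $i=0,\dots ,s-1$ produces $1/\bigl( g(\psi (t))g(\psi ^{\left\langle 2\right\rangle }(t))\cdots g(\psi ^{\left\langle s\right\rangle }(t))\bigr) $, which is precisely the right-hand side of (\ref{9}).

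The argument is a mechanical specialization, so no step is genuinely difficult. The only care required is with the index shift $\psi ^{\left\langle i\right\rangle }\mapsto \psi ^{\left\langle i+1\right\rangle }$ that appears when the outer $\psi $ in $g\circ \psi \circ (\cdot )^{\left\langle i\right\rangle }$ is absorbed into the iterate, and with the fold $(d/dt)^{n-1}\circ (d/dt)=(d/dt)^{n}$ that rewrites (\ref{23}) in the shape of (\ref{24}) when $h\equiv 1$.
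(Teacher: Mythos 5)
Your proposal is correct and follows exactly the route the paper intends: the paper derives Corollary \ref{C1} precisely by setting $h(t)=1$ in Theorems \ref{T1} and \ref{T2}, whence $\omega(t)=t$ and $\psi/(h\circ\psi)=\psi$, and your careful handling of the derivative fold $(d/dt)^{n-1}\circ(d/dt)=(d/dt)^{n}$ and the index shift $g\circ\psi\circ\psi^{\left\langle i\right\rangle}=g(\psi^{\left\langle i+1\right\rangle})$ correctly fills in the details the paper leaves implicit. Nothing further is needed.
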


\noindent A particular case of Theorems \ref{T1} and \ref{T2} when
$\psi:\equiv \varphi^{\left \langle -1\right \rangle }\equiv \omega,$ is given by
the following corollary.

\begin{corollary}
\label{C5}Let $\mathcal{T}$ be the matrix whose its $\left(  k,n\right)  $-th
entry is given by (\ref{1}) for which $\varphi^{\left \langle -1\right \rangle
}\left(  t\right)  :=\psi \left(  t\right)  =th\left(  \psi \left(  t\right)
\right)  .$Then, there holds%
\begin{equation}
\left[  \mathcal{T}^{s}\right]  _{k,n}=\frac{a_{k}}{a_{n}}\dbinom{n}{k}\left(
\frac{d}{dt}\right)  _{\! \!t=0}^{\! \!n-k}\left(  g\left(  \psi \left(
t\right)  \right)  \right)  ^{s},\  \ s\in \mathbb{Z}. \label{7}%
\end{equation}

\end{corollary}

\noindent For $h\left(  t\right)  =1$ and $\varphi \left(  t\right)  =t$ in
Theorems \ref{T1} and \ref{T2} we obtain the following corollary.

\begin{corollary}
\label{C2}Let $\left(  g_{n};n\geq0\right)  $ and $\left(  a_{n}%
;n\geq1\right)  ,$ be sequences of complex numbers with $a_{j}\neq0,$
$j\geq1,$ and let $\mathcal{T}$ be the matrix whose its $\left(  k,n\right)
$-th entry is%
\begin{equation}
\left[  \mathcal{T}\right]  _{k,n}=\frac{a_{k}}{a_{n}}g_{n-k}\delta_{\left(
n\geq k\right)  }. \label{10}%
\end{equation}
Then, by setting $g\left(  t\right)  =\underset{j\geq0}{\sum}g_{j}t^{j}$ and
$\mathcal{T}^{0}:=\mathcal{I}$ be the identity matrix, we get%
\begin{equation}
\left[  \mathcal{T}^{s}\right]  _{k,n}=\frac{a_{k}}{a_{n}}\frac{1}{\left(
n-k\right)  !}\left(  \frac{d}{dt}\right)  _{\! \!t=0}^{\! \!n-k}\left(
g\left(  t\right)  \right)  ^{s},\  \  \ s\in \mathbb{Z}. \label{11}%
\end{equation}

\end{corollary}

\noindent For $g\left(  t\right)  =h\left(  t\right)  =1,$ Theorems \ref{T1}
and \ref{T2} can be expressed in terms of partial Bell polynomials as it is
showen in the following corollary.

\begin{corollary}
\label{C3}Let $\left(  a_{n};n\geq1\right)  ,$ be a sequence of complex
numbers with $a_{j}\neq0,$ $j\geq1,$ $\varphi$ be a power series such that
$\varphi \left(  0\right)  =0,$ $\varphi^{\prime}\left(  0\right)  \neq0$ and
let $\varphi^{\left \langle -1\right \rangle }:=\psi$ be its compositional
inverse. Then, for the matrix $\mathcal{T}$ defined by%
\begin{equation}
\left[  \mathcal{T}\right]  _{k,n}=\frac{a_{k}}{a_{n}}B_{n,k}\left(
\varphi \right)  \label{14}%
\end{equation}
we have%
\begin{equation}
\left[  \mathcal{T}^{s}\right]  _{k,n}=\frac{a_{k}}{a_{n}}B_{n,k}\left(
\varphi^{\left \langle s\right \rangle }\right)  ,\text{ \ }s\in \mathbb{Z}.
\label{15}%
\end{equation}

\end{corollary}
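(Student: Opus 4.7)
The plan is to deduce Corollary \ref{C3} directly from Theorems \ref{T1} and \ref{T2} by specializing $g(t)=h(t)=1$ and then recognizing the resulting derivatives as partial Bell polynomials via their defining generating function (\ref{13}). First I would observe that with $h(t)=1$, the fixed-point equation $\omega(t)=t h(\omega(t))$ forces $\omega(t)=t$, so $\omega$ is the identity and hence $\varphi\circ\omega=\varphi$; consequently $(\varphi\circ\omega)^{\langle s\rangle}(t)=\varphi^{\langle s\rangle}(t)$ and, in Theorem \ref{T2}, $\psi/(h\circ\psi)=\psi$, whence $\bigl(\psi/(h\circ\psi)\bigr)^{\langle s\rangle}(t)=\psi^{\langle s\rangle}(t)$. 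With $g(t)=1$ in addition, every factor in the products $\prod_{i=0}^{s-1} g\circ\omega\circ(\varphi\circ\omega)^{\langle i\rangle}(t)$ and $\prod_{i=0}^{s-1} g\circ\psi\circ(\cdots)^{\langle i\rangle}(t)$ collapses to $1$.

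Next I would check that the matrix $\mathcal{A}$ in (\ref{14}) is indeed the specialization of the matrix in (\ref{1})–(\ref{23}). Under $g(t)=h(t)=1$, the $(k,n)$-entry becomes
\begin{equation*}
\left[\mathcal{A}\right]_{k,n}=\frac{a_{k}}{a_{n}}\frac{1}{k!}\left(\frac{d}{dt}\right)_{\!\!t=0}^{\!\!n-1}\!\frac{d}{dt}\bigl((\varphi(t))^{k}\bigr)=\frac{a_{k}}{a_{n}}\frac{1}{k!}\left(\frac{d}{dt}\right)_{\!\!t=0}^{\!\!n}\bigl((\varphi(t))^{k}\bigr),
\end{equation*}
and by the definition (\ref{13}) of the partial Bell polynomials the right-hand side equals $\frac{a_{k}}{a_{n}}B_{n,k}(\varphi)$, so (\ref{14}) is consistent with (\ref{1}).

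With these reductions in hand, Theorem \ref{T1} gives
\begin{equation*}
\left[\mathcal{A}^{s}\right]_{k,n}=\frac{a_{k}}{a_{n}}\frac{1}{k!}\left(\frac{d}{dt}\right)_{\!\!t=0}^{\!\!n}\bigl(\varphi^{\langle s\rangle}(t)\bigr)^{k},
\end{equation*}
and Theorem \ref{T2} gives the same expression with $\varphi^{\langle s\rangle}$ replaced by $\psi^{\langle s\rangle}$. Invoking (\ref{13}) once more, with $\varphi$ there replaced by $\varphi^{\langle s\rangle}$ (respectively $\psi^{\langle s\rangle}$) — which is a legitimate substitution since both power series vanish at $0$ — identifies these derivatives as $B_{n,k}(\varphi^{\langle s\rangle})$ and $B_{n,k}(\psi^{\langle s\rangle})$, respectively, yielding (\ref{15}).

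There is really no substantive obstacle here: the corollary is a clean specialization. The only point requiring a moment's care is the very first reduction, namely verifying that $h(t)=1$ forces $\omega(t)=t$ so that the outer composition with $\omega$ disappears from every formula; once that is noted, the remaining steps are essentially a transcription through the generating-function identity (\ref{13}).
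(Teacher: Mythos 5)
Your proposal is correct and follows exactly the route the paper intends: the paper offers no separate proof, simply noting that Corollary \ref{C3} results from setting $g(t)=h(t)=1$ in Theorems \ref{T1} and \ref{T2}, and your verification that $\omega(t)=t$, that the $g$-products collapse to $1$, and that the generating function (\ref{13}) identifies the derivatives as $B_{n,k}(\varphi^{\left\langle s\right\rangle})$ and $B_{n,k}(\psi^{\left\langle s\right\rangle})$ fills in precisely those details. Nothing is missing.
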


\noindent Engbers et al. \cite{eng}, gave interesting combinatorial
interpretations of some cases of $B_{n,k}\left(  \psi \right)  $ when
$\varphi \left(  t\right)  =\underset{n\in R}{\sum}\varphi_{n}\frac{t^{n}}%
{n!},$ with $R$ is a subset of the set $\left \{  1,2,3,\ldots \right \}  $
containing $1.$ Also, when $\varphi$ and $g$ have integral coefficients,
Mihoubi and Rahmani \cite{mih3} gave combinatorial interpretations with
counting colored partitions on a finite set.

\noindent For $g\left(  t\right)  =\left(  \frac{t}{\varphi \left(  t\right)
}\right)  ^{\alpha}$ and $h\left(  t\right)  =1,$ Theorems \ref{T1} and
\ref{T2} become as follows.

\begin{corollary}
\label{C4}Let $\left(  a_{n};n\geq1\right)  ,$ be a sequence of complex
numbers with $a_{j}\neq0,$ $j\geq1,$ $\varphi$ be a power series such that
$\varphi \left(  0\right)  =0,$ $\varphi^{\prime}\left(  0\right)  \neq0$ and
let $\mathcal{T}$ be the matrix whose its $\left(  k,n\right)  $-th entry is%
\begin{equation}
\left[  \mathcal{T}\right]  _{k,n}=\frac{a_{k}}{a_{n}}\dbinom{n}{k}\left(
\frac{d}{dt}\right)  _{\! \!t=0}^{\! \!n-k}\left(  \left(  \frac{t}%
{\varphi \left(  t\right)  }\right)  ^{\alpha-k}\right)  . \label{16}%
\end{equation}
Then%
\begin{equation}
\left[  \mathcal{T}^{s}\right]  _{k,n}=\frac{a_{k}}{a_{n}}\dbinom{n}{k}\left(
\frac{d}{dt}\right)  _{\! \!t=0}^{\! \!n-k}\left(  \left(  \frac{t}%
{\varphi^{\left \langle s\right \rangle }\left(  t\right)  }\right)  ^{\left(
\alpha-k\right)  \operatorname{sign}s}\right)  ,\text{ \ }s\in \mathbb{Z}%
,\  \alpha \in \mathbb{R}, \label{17}%
\end{equation}
where $\mathbb{R}$ is the set of real numbers and $\operatorname{sign}%
s=\delta_{\left(  s\geq0\right)  }-\delta_{\left(  s\leq0\right)  }.$
\end{corollary}

\begin{example}
For $\varphi \left(  t\right)  =\frac{t}{1-\beta t}$ in Corollary \ref{C2} we
get $\varphi^{\left \langle s\right \rangle }\left(  t\right)  =\frac{t}{1-\beta
st}$ and%
\begin{equation}
\left[  \mathcal{T}^{s}\right]  _{k,n}=s^{n-k}\left[  \mathcal{T}\right]
_{k,n}=\dbinom{n}{k}\left(  \beta s\right)  ^{n-k}\left(  \alpha+n-1\right)
_{n-k},\text{ \ }s\in \mathbb{Z}. \label{26}%
\end{equation}
where $\left(  \alpha \right)  _{0}=1,\  \left(  \alpha \right)  _{n}%
=\alpha \left(  \alpha-1\right)  \cdots \left(  \alpha-n+1\right)  ,$ $n\geq1.$
\end{example}

\begin{example}
For $\varphi \left(  t\right)  =\left(  1+t\right)  ^{\alpha}-1$ and $g\left(
t\right)  =1$ in Corollary \ref{C4} we get $\varphi^{\left \langle
s\right \rangle }\left(  t\right)  =\left(  1+t\right)  ^{\alpha^{s}}-1$ and%
\begin{equation}
\left[  \mathcal{T}^{s}\right]  _{k,n}=\frac{1}{k!}\underset{j=0}{\overset
{k}{\sum}}\left(  -1\right)  ^{k-j}\dbinom{k}{j}\left(  \alpha^{s}j\right)
_{n-k},\  \  \alpha \neq0,\text{ }s\in \mathbb{Z}. \label{31}%
\end{equation}
One can verify that we have%
\[
\underset{n\geq0}{\sum}\left[  \mathcal{T}^{s}\right]  _{k,n+k}\frac{t^{n}%
}{n!}=\frac{1}{k!}\left(  \left(  1+t\right)  ^{\alpha^{s}}-1\right)  ^{k},
\]
from which it results, by derivation, that this sequence obeys to the
recurrence relation%
\[
\left[  \mathcal{T}^{s}\right]  _{k,n+k+1}=\alpha^{s}\left[  \mathcal{T}%
^{s}\right]  _{k-1,n+k-1}+\left(  \alpha^{s}k-n\right)  \left[  \mathcal{T}%
^{s}\right]  _{k,n+k}.
\]

\end{example}

\section{Powers of special non-triangular matrices}

Let $\left(  a_{j};j\geq1\right)  $ be a sequence of real numbers with
$a_{j}\neq0,$ $j\geq1,$ $\left(  b_{j};j\geq1\right)  $ be a sequence of
non-negative integers, $\varphi,h$ be power series such that $\varphi \left(
0\right)  =0,$ $\varphi^{\prime}\left(  0\right)  h\left(  0\right)  \neq0,$
and let $\mathcal{N}$ be the matrix whose its $\left(  k,n\right)  $-th entry
is%
\begin{equation}
\left[  \mathcal{N}\right]  _{k,n}=\frac{a_{k}}{a_{n}}\frac{1}{k!}\left(
\frac{d}{dt}\right)  _{\! \!t=0}^{\! \!n+b_{k}-1}\left(  \left(  h\left(
t\right)  \right)  ^{n+b_{k}}\frac{d}{dt}\left(  \left(  \varphi \left(
t\right)  \right)  ^{k}\right)  \right)  ,\  \ n\geq1,\ k\geq1. \label{23}%
\end{equation}
We see below that $\left[  \mathcal{N}\right]  _{k,n}$ must be%
\[
\left[  \mathcal{N}\right]  _{k,n}=\frac{a_{k}}{a_{n}}B_{n+b_{k},k}\left(
\varphi \circ \omega \right)  ,\  \ n\geq1,\ k\geq1,
\]
where $z=\omega \left(  t\right)  $ solution of the equation $z=th\left(
z\right)  .$\newline In this section, we give the $s$-th power of matrix whose
$\left(  k,n\right)  $-th entry is in form of (\ref{23}).

\begin{theorem}
[Closed form for $\mathcal{N}^{s},\ s\in \mathbb{N}$]\label{T3}Let
$\mathcal{N}$ be the matrix whose its $\left(  k,n\right)  $-th entry is given
by (\ref{23}). Then, for any non-negative integer $s,$ there holds%
\begin{equation}
\left[  \mathcal{N}^{s}\right]  _{k,n}=\frac{a_{k}}{a_{n}}B_{n+sb_{k}%
,k}\left(  \left(  \varphi \circ \omega \right)  ^{\left \langle s\right \rangle
}\right)  ,\  \ n\geq1,\ k\geq1. \label{32}%
\end{equation}

\end{theorem}

\begin{proof}
By setting $\mathcal{N}^{0}=\mathcal{I},$ it is obvious that the theorem is
true for $s=0.$ By Lemma \ref{L0}, it suffices to prove the proposition for
$a_{j}=1,$ $j\geq1.$ Indeed, if we set%
\[
F_{k,0}\left(  t\right)  =\frac{t^{k}}{k!},\  \ F_{k,s}\left(  t\right)
=\underset{n\geq1}{\sum}\left[  \mathcal{N}^{s}\right]  _{k,n}\frac{t^{n}}%
{n!},\  \ s\geq1,\  \ k\geq1,
\]
we get using Lagrange inversion formula%
\begin{align*}
F_{k,1}\left(  t\right)   &  =\frac{1}{k!}\underset{n\geq1}{\sum}\frac{t^{n}%
}{n!}\left(  \frac{d}{du}\right)  _{\! \!t=0}^{\! \!n+b_{k}-1}\left(  \left(
h\left(  u\right)  \right)  ^{n+b_{k}}\frac{d}{du}\left(  \left(
\varphi \left(  u\right)  \right)  ^{k}\right)  \right)  \\
&  =\frac{1}{k!}\left(  \frac{d}{dt}\right)  ^{b_{k}}\left[  \underset{n\geq
1}{\sum}\frac{t^{n}}{n!}\left(  \frac{d}{du}\right)  _{\! \!t=0}^{\! \!n-1}%
\left(  \left(  h\left(  u\right)  \right)  ^{n}\frac{d}{du}\left(  \left(
\varphi \left(  u\right)  \right)  ^{k}\right)  \right)  \right]  \\
&  =\frac{1}{k!}\left(  \frac{d}{dt}\right)  ^{b_{k}}\left(  \left(
\varphi \left(  z\right)  \right)  ^{k}\right)  .
\end{align*}
For $s\geq1,$ we have $\left[  \mathcal{N}^{s+1}\right]  _{k,n}=\left[
\mathcal{N}^{s}\mathcal{N}\right]  _{k,n}=\underset{j\geq1}{\sum}\left[
\mathcal{N}^{s}\right]  _{k,j}\left[  \mathcal{N}\right]  _{j,n}.$ So, we get%
\begin{align*}
F_{k,s}\left(  t\right)   &  =\underset{n\geq1}{\sum}\left(  \underset{j\geq
1}{\sum}\left[  \mathcal{N}^{s-1}\right]  _{k,j}\left[  \mathcal{N}\right]
_{j,n}\right)  \frac{t^{n}}{n!}\\
&  =\underset{j\geq1}{\sum}\left[  \mathcal{N}^{s-1}\right]  _{k,j}\left(
\underset{n\geq1}{\sum}\left[  \mathcal{N}\right]  _{j,n}\frac{t^{n}}%
{n!}\right)  \\
&  =\left(  \frac{d}{dt}\right)  ^{\! \!b_{k}}\underset{j\geq1}{\sum}\left[
\mathcal{N}^{s-1}\right]  _{k,j}\frac{\left(  \varphi \left(  z\right)
\right)  ^{j}}{j!}\\
&  =\left(  \frac{d}{dt}\right)  ^{\! \!b_{k}}F_{k,s-1}\left(  \varphi \left(
z\right)  \right)  .
\end{align*}
So, recursively on $s,$ we obtain%
\begin{align*}
F_{k,s}\left(  t\right)   &  =\left(  \frac{d}{dt}\right)  ^{\! \!sb_{k}%
}F_{k,0}\left(  \left(  \varphi \circ \omega \right)  ^{\left \langle
s\right \rangle }\left(  t\right)  \right)  \\
&  =\frac{1}{k!}\left(  \frac{d}{dt}\right)  ^{\! \!sb_{k}}\left(  \left(
\varphi \circ \omega \right)  ^{\left \langle s\right \rangle }\left(  t\right)
\right)  ^{k}\\
&  =\underset{n\geq1}{\sum}B_{n+sb_{k},k}\left(  \left(  \varphi \circ
\omega \right)  ^{\left \langle s\right \rangle }\right)  \frac{t^{n}}{n!}.
\end{align*}

\end{proof}

\noindent In particular, for $h\left(  t\right)  =1,$ Theorem \ref{T3} being
equivalent to:

\begin{corollary}
\label{C}Let $\mathcal{N}$ be the matrix whose its $\left(  k,n\right)  $-th
entry is given by%
\[
\left[  \mathcal{N}\right]  _{k,n}=\frac{a_{k}}{a_{n}}B_{n+b_{k},k}\left(
\varphi \right)  .
\]
Then, for any non-negative integer $s,$ there holds%
\begin{equation}
\left[  \mathcal{N}^{s}\right]  _{k,n}=\frac{a_{k}}{a_{n}}B_{n+sb_{k}%
,k}\left(  \varphi^{\left \langle s\right \rangle }\right)  . \label{40}%
\end{equation}

\end{corollary}

\begin{remark}
By the equality $\mathcal{N}^{s+t}=\mathcal{N}^{s}\mathcal{N}^{t},$ it results%
\begin{equation}
B_{n+\left(  s+t\right)  b_{k},k}\left(  \varphi^{\left \langle
s+t\right \rangle }\right)  =\underset{j\geq0}{\sum}B_{j+sb_{k},k}\left(
\varphi^{\left \langle s\right \rangle }\right)  B_{n+tb_{k},j}\left(
\varphi^{\left \langle t\right \rangle }\right)  . \label{34}%
\end{equation}

\end{remark}

\begin{example}
For $\varphi_{\alpha}\left(  t\right)  =\frac{t}{1-\alpha t}=\underset{j\geq
1}{\sum}\alpha^{j-1}t^{j},$ $\alpha \in \mathbb{R},$ we get%
\[
\varphi_{\alpha}^{\left \langle s\right \rangle }\left(  t\right)
=\varphi_{s\alpha}\left(  t\right)  \text{ and }B_{n,k}\left(  \varphi
_{\alpha}^{\left \langle s\right \rangle }\right)  =\left(  s\alpha \right)
^{n-k}%
\genfrac{\lfloor}{\rfloor}{0pt}{0}{n}{k}%
.
\]
Then, by Corollary \ref{C}, there holds%
\begin{equation}
\left[  \mathcal{N}^{s}\right]  _{k,n}=\frac{a_{k}}{a_{n}}\left(
s\alpha \right)  ^{n+sb_{k}-k}%
\genfrac{\lfloor}{\rfloor}{0pt}{0}{n+sb_{k}}{k}%
.\label{35}%
\end{equation}
where $%
\genfrac{\lfloor}{\rfloor}{0pt}{0}{n}{k}%
$ is the $\left(  n,k\right)  $-th Lah number.
\end{example}

\begin{example}
For $\varphi_{\alpha}\left(  t\right)  =\left(  1+t\right)  ^{\alpha
}-1=\underset{j\geq1}{\sum}\dbinom{\alpha}{j}t^{j},$ $\alpha \in \mathbb{R}%
-\left \{  0\right \}  ,$ we get%
\[
\varphi_{\alpha}^{\left \langle s\right \rangle }\left(  t\right)
=\varphi_{\alpha^{s}}\left(  t\right)  \text{ and }B_{n,k}\left(
\varphi_{\alpha}^{\left \langle s\right \rangle }\right)  =\frac{1}{k!}%
\underset{j=0}{\overset{k}{\sum}}\left(  -1\right)  ^{k-j}\dbinom{k}{j}\left(
\alpha^{s}j\right)  _{n-k}.
\]
Then, by Corollary \ref{C}, there holds%
\begin{equation}
\left[  \mathcal{N}^{s}\right]  _{k,n}=\frac{a_{k}}{a_{n}}\frac{1}%
{k!}\underset{j=0}{\overset{k}{\sum}}\left(  -1\right)  ^{k-j}\dbinom{k}%
{j}\left(  \alpha^{s}j\right)  _{n+sb_{k}-k},\  \  \alpha \neq0.\label{38}%
\end{equation}

\end{example}

\begin{example}
For $\varphi_{\alpha,m}\left(  t\right)  =\frac{t}{\left(  1-\alpha
t^{m}\right)  ^{\frac{1}{m}}}=\underset{j\geq0}{\sum}\dbinom{-\frac{1}{m}}%
{j}\alpha^{j}t^{mj+1},$ $\alpha \in \mathbb{R},$ $m\in \mathbb{N-}\left \{
0\right \}  ,$ we get $\varphi_{\alpha,m}^{\left \langle s\right \rangle }\left(
t\right)  =\varphi_{s\alpha,m}\left(  t\right)  $ and%
\[
B_{n,k}\left(  \varphi_{\alpha,m}^{\left \langle s\right \rangle }\right)
=\left \{
\begin{array}
[c]{c}%
\frac{n!}{k!}\dbinom{-\frac{k}{m}}{\left(  n+k\right)  /m}\left(
s\alpha \right)  ^{\frac{n+k}{m}}\text{ if }m\mid n+k,\\
\smallskip \\
0\text{ \  \  \  \  \  \  \  \  \  \  \  \  \  \  \  \  \  \  \ otherwise.}%
\end{array}
\right.
\]
Then, by Corollary \ref{C}, there holds%
\[
\left[  \mathcal{N}^{s}\right]  _{k,n}=\left \{
\begin{array}
[c]{c}%
\frac{a_{k}}{a_{n}}\frac{\left(  n+sb_{k}\right)  !}{k!}\dbinom{-\frac{k}{m}%
}{\left(  n+k+sb_{k}\right)  /m}\left(  s\alpha \right)  ^{\frac{n+k+sb_{k}}%
{m}}\text{ if }m\mid n+k+sb_{k},\\
\smallskip \\
0\text{ \  \  \  \  \  \  \  \  \  \  \  \  \  \  \  \  \  \  \ otherwise.}%
\end{array}
\right.
\]

\end{example}

\end{document}